\let\nofiles\relax

\documentclass[twoside]{article}

\usepackage{subfigure}
\usepackage{mathrsfs}
\usepackage[sumlimits]{amsmath}
\usepackage{amsthm,amssymb,amsfonts}
\usepackage{graphicx,color}
\usepackage{algorithm}
\usepackage{algpseudocode}
\usepackage{titlesec}
\usepackage{float}
\usepackage{caption}
\usepackage{extarrows}

\usepackage[
linktocpage=true,
colorlinks=true,
linkcolor=blue,
citecolor=blue,
urlcolor=blue
]{hyperref}
\usepackage[numbers,sort&compress]{natbib}

\allowdisplaybreaks

\catcode`@=11
\long\def\@makefntext#1{\noindent #1}
\newskip\tabcentering\tabcentering=1000pt plus 1000pt minus 1000pt

\def\MCH#1#2{\setbox0=\hbox{\raise#1\hbox{#2}}\smash{\box0}}% move char

\def\evenhead{}
\def\oddhead{}
\def\@evenhead{\hbox to\textwidth{\footnotesize\rm\thepage\hfill{\small\it\evenhead}}}
\def\@oddhead{\hbox to\textwidth{\footnotesize{\small\it\oddhead}\hfill\footnotesize\rm\thepage}}
\def\@evenfoot{}
\def\@oddfoot{}

\catcode`@=12
\font\bb=cmbx8%
\font\small=cmr8

\font\st=cmbx10 scaled\magstep2%
\def\title#1{{\noindent\st#1}}%
\def\author#1{\vspace*{0.3 true cm}{\noindent\bf #1}}%
\def\abstract#1{\vspace*{1 true cm}{\noindent{\footnotesize{\bb Abstract}\hspace{4true mm}#1}}}%
\def\keywords#1{\vspace{5mm}\noindent{\footnotesize{\bb Keywords}\hspace{4true mm}#1}}%

\newtheorem{definition}{Definition}[section]

\newtheorem{lemma}[definition]{Lemma}
\newtheorem{theorem}[definition]{Theorem}

\newtheorem*{remark*}{Remark}

\newtheoremstyle{noparens}%
{}{}%
{\itshape}{}%
{\bfseries}{\bf .}%
{ }%
{\thmname{#1}\thmnumber{ #2}\mdseries\thmnote{#3}}

\theoremstyle{noparens}

\theoremstyle{definition}
\newtheorem{remark}[definition]{Remark}

\numberwithin{equation}{section}
\numberwithin{figure}{section}

\titleformat*{\section}{\large \bfseries}
\titleformat*{\subsection}{\bfseries}
\titleformat*{\subsubsection}{\normalsize \bfseries}
\newcommand{\citeup}[1]{\textsuperscript{\cite{#1}}}

\begin{document}
%%%%for editor%%%%%%%%%%
%\topbottom{1}{?}{40}{1}{2024}\\
%{\footnotesize https://doi.org/10.1007/s10255-022-1020-9\\\vspace{-5mm}\\
%	http://www.ApplMath.com.cn \& www.SpringerLink.com}
%
%\vskip-1.35cm
%
%
%\begin{figure}[h] \hskip9.9cm {
%		\includegraphics[height=1.4cm,width=4.5cm]{nactmark2024.eps}}
%\end{figure}
%
%\vskip35pt
%%%%for editor%%%%%%%%%%

%%%%for editor%%%%%%%%%%

%\def\oddhead{Polynomial Stability of an Elastic Plate}%odd page header��title
%\def\evenhead{Y.N. SUN, Q. ZHANG}%even page header ��abbreviation of all the author's name, e.g., Z.W. Ning

\title{Polynomial Stability of an Elastic Thin Plate on Non-Smooth Domain}%

\author{SUN Ya-nan,ZHANG Qiong}

%\institute{$^1$School of Mathematics and Statistics, Beijing Key Laboratory on MCAACI, Beijing Institute of Technology, Beijing, 100081, China  ($^\dag$E-mail: zhangqiong@bit.edu.cn)}

\vskip10pt

%\no{\footnotesize{\bf This paper is dedicated to Professor Philippe
%		G. Ciarlet on the occasion of his 80th birthday.}}

\vskip-10pt

\abstract{This paper studies the polynomial stabilization of an elastic plate with dynamical boundary conditions on a non-smooth domain. To deal with  the possible loss of solution regularity induced by boundary singularities, we formulate the problem as a precise variational framework. We prove that for domains with sufficiently small corner angles, the system retains the polynomial decay rate under standard geometric control conditions. In cases where larger corner angles lead to a significant regularity loss, we show that polynomial stability is recovered by introducing a  feedback control at the corners.}

\keywords{elastic plate; polynomial stability; non-smooth domain; $C_0$-semigroup}%

%\mr{ }%2000 MR Subject Classification

\section{Introduction}\label{Introduction}

Let  $\Omega$ be  a bounded open connected convex domain in $\mathbb{R}^2$.
Its boundary $\partial\Omega$ is assumed to be a curvilinear polygon of class
$C^{1,1}$  with a finite set of corner points   $P = \{P_i : i = 1, 2, \dots, p\}$. The boundary
is composed of two relatively open subsets $\Gamma_0$ and $\Gamma_1$, where
$\Gamma_0$ is non-empty and has positive boundary measure. The  two subsets $\Gamma_0$ and $\Gamma_1$ are either disjoint or share two common endpoints, both of which are corner points.

We consider a linear hybrid model of an elastic thin plate on $\Omega$.
The plate is clamped along the portion $\Gamma_0$  of the boundary. The complementary part $\Gamma_1$ is free  but is bordered by a flange possessing both mass and rotational inertia.  The vibration  of the plate is governed by the following system \cite{JE}:
\begin{equation}\label{eq1}
	\begin{cases}
		u_{tt}+\Delta^2 u = 0, \qquad&\mbox{in}\;\Omega,\\
		J\partial_{\nu}u_{tt} + \mathcal{B}_1 u = -d_1 \partial_{\nu}u_t,&\text{on}\; \Gamma_1 ,\\
		\rho u_{tt} - \mathcal{B}_2 u = -d_2 u_t, &\text{on}\; \Gamma_1 ,\\
		u=\partial_{\nu} u = 0, &\text{on}\; \Gamma_0,\\
		u(0,x) = u_0(x),\;\;u_t(0,x) = u_1(x), \qquad&\mbox{in}\; \Omega,
	\end{cases}
\end{equation}
%with the boundary condition on $\Gamma_0$,
%\begin{equation}\label{clamped}
%u=\partial_{\nu} u = 0,\qquad  \text{on}\; \Gamma_0.
%\end{equation}
%In the case of a plate which is simply supported on $\Gamma_0$, \eqref{clamped} is replaced by (\cite{Lions})
%\begin{equation}\label{simply}
%u=\mathcal B_1 u =0,\qquad\mbox{on}\;\Gamma_0.
%\end{equation}
where $\mathcal{B}_{1}$ and  $\mathcal{B}_{2}$ are boundary operators associated with the plate equation:
\begin{align*}\displaystyle
	&\mathcal{B}_{1}u=\Delta u+(1-\mu)\Big[2\nu_{1}\nu_{2}\frac{\partial^{2}u}{\partial x_{1}\partial x_{2}}-\nu_{1}^{2}\frac{\partial^{2}u}{\partial x_{2}^{2}}-\nu_{2}^{2}\frac{\partial^{2}u}{\partial x_{1}^{2}}\Big],\\ \noalign{\medskip}  \displaystyle
	&\mathcal{B}_{2}u=\partial_{\nu}\Delta u+(1-\mu)\partial_{\tau}\Big[(\nu_{1}^{2}-\nu_{2}^{2})\frac{\partial^{2}u}{\partial x_{1}\partial x_{2}}+ \nu_{1}\nu_{2}\big(\frac{\partial^{2}u}{\partial x_{2}^{2}}-\frac{\partial^{2}u}{\partial x_{1}^{2}}\big)\Big],
\end{align*}
$\nu=(\nu_{1},\;\nu_{2})$ and $\tau=(-\nu_{2},\;\nu_{1})$ denote the unit outer normal vector and the unit tangent vector on the boundary, respectively. $ 0< \mu <\frac{1}{2}$ is the Poisson ratio of elasticity, $\rho>0$  is the linear boundary
density, and $J > 0 $ is the bending moment of inertia per unit length of the boundary.
The initial data are given   by $ u_0,\; u_1$, and the control parameters $d_1$ and $d_2$  are assumed to be positive.

The energy of the system \eqref{eq1}  is defined by
\begin{align}\label{energy}
	E(t)=\frac{1}{2}\Big[a(u(t))+\|u_{t}\|^2_{L^2(\Omega)}
	+J\|\partial_{\nu}u_{t}\|^{2}_{L^2(\Gamma_{1})} +\rho\|u_{t}\|^{2}_{L^2(\Gamma_{1})} \Big],\ \ t\geq 0,
\end{align}
where the bilinear form $a(u):=a(u,u)$ is
\begin{align}\label{bilinear-a}
	\begin{split}
		a(u,v)=&\int_{\Omega}
		\bigg[\frac{\partial^{2}u}{\partial x_{1}^{2}}\frac{\partial^{2}v}{\partial x_{1}^{2}}+\frac{\partial^{2}u}{\partial x_{2}^{2}}\frac{\partial^{2}v}{\partial x_{2}^{2}}
		+\mu\bigg(\frac{\partial^{2}u}{\partial x_{1}^{2}}\frac{\partial^{2}v}{\partial x_{2}^{2}}+\frac{\partial^{2}u}{\partial x_{2}^{2}}\frac{\partial^{2}v}{\partial x_{1}^{2}}\bigg) \\
		& \qquad+2(1-\mu)\frac{\partial^{2}u}{\partial x_{1}\partial x_{2}}\frac{\partial^{2}v}{\partial x_{1}\partial x_{2}}
		\bigg]dx_{1}dx_{2}, \;\; \forall\; u, v\in H^2(\Omega).
	\end{split}
\end{align}
\noindent
%A direct computation yields
%$$
%\frac{d}{dt}E(t)
%=-d_1\|u_{t}\|_{\Gamma_{1}}^{2}
%-d_2\|\partial_{\nu}u_{t}\|_{\Gamma_{1}}^{2}- \sum_{P_i \in \partial \Omega \setminus  {\Gamma_0}} |u_t(P_i)|^2,$$
%which implies that system \eqref{eq1}  is dissipative.

There exists an extensive literature on the stabilization of elastic plates (see, e.g., \cite{Robbiano, Ammari1, JE, Littman, Liu-Liu1, LiuYan, Rao1, Rao2, Tucsnak2, Tebou} and references therein). It is known \citeup{LiuYan, Rao2} that for the elastic plate with dynamical boundary conditions (i.e., $J,\rho > 0$ in \eqref{eq1}), the energy is not exponentially stable. Specifically, Rao proved that under sufficient smoothness of the boundary $\partial\Omega$ and certain geometric conditions on $\Omega$ (typically the Geometric Control Condition \citeup{Lebeau}), the semigroup associated with system \eqref{eq1} is polynomially stable with decay rate $t^{-1/2}$.

A natural subsequent question is: what happens when the domain is not smooth? For instance, if $\Omega$ is a  polygon or  curvilinear polygon. Studies on the stabilization of waves, plates, shells, or coupled systems on non-smooth domains can be found in Ref. \cite{ammari, chen1, chen2}, among others. The stabilization problem on non-smooth domains presents two main difficulties. First, the solution may lose regularity for certain domain geometries \citeup{blum, dauge, grisvard}, necessitating the analysis of the system in a weaker space and a reformulation of its functional framework. Second, due to this loss of regularity, a new integration-by-parts formula must be established, which introduces additional terms arising from the corners. Consequently, feedback control acting on the corners must also be incorporated.

In this paper, we analyze the long-time behavior of system \eqref{eq1}. To deal with the potential loss of solution regularity, we formulate a variational framework. We note that the regularity of the solution depends on the angles at the corners \citeup{blum}. When these angles are sufficiently small, we obtain polynomial stability with the same decay rate as for smooth domains. In cases where regularity of the solution is lost, we show that introducing feedback at the corners restores polynomial stability. This result complements analysis of the long-time dynamics of elastic plates and will   facilitate future study of the elastic system on more general domains.

The remainder of this paper is organized as follows. Section 2 introduces the preliminaries and main result.   Section 3 is dedicated to proving the polynomial stability.

In the subsequent analysis, the symbol $C$ denotes a generic positive constant whose value may differ from line to line or even within the same line.
%We employ the notation $f\lesssim g$  to mean $f\leq Cg$, and  $f\sim g$ to indicate $C_1 g\leq f\leq C_2 g$ for some positive constants $C_1$ and $C_2$.
For brevity, the $L^2(\Omega)$-norm and $L^2(\Gamma_1)$-norm will be denoted by $\|\cdot\|_{\Omega}$ and $\|\cdot\|_{\Gamma_1}$, respectively.

\section{Preliminaries and main results}

This section presents the necessary  preliminaries and main results. We first define the space
\begin{align*}
	V=\big\{u\in H^{2}(\Omega)\; \big|\; \ u|_{\Gamma_{0}}= \partial_{\nu} u|_{\Gamma_{0}}=0\big\}.
\end{align*}

Since the boundary of $\Omega$ is a curvilinear
polygon,   establishing the required regularity results and the corresponding integration-by-parts formula for the bi-laplacian operator on this non-smooth domain is essential. It is  known that the solutions to the
fourth-order boundary problems on the nonsmooth domain can be divided
into a regular part and a singular part (see, e.g.,
\cite{blum,dauge,grisvard,nicaise92}).
The singularities near a ``curved'' corner are determined  by the boundary conditions, Poisson ratio $\mu$ and the interior angle between the tangents to the two boundary segments meeting at that corner.

Accordingly, we  consider two cases: (i) when the interior angle at each critical boundary point is sufficiently small, and (ii)  the complementary case. The corresponding regularity results and integration-by-parts formulae are presented in the following two lemmas.

\medskip

\begin{lemma} \label{Le2.2} (See Ref. \cite{blum, dauge}) Suppose   the domain $\Omega$ satisfies  the following  condition.
	
	{\bf   (G)  } Let $\omega_j$ ($j=1,...,p$) denote the interior
	angles at the corners of $\Omega$ between two consecutive curve. Assume there exists an  $\omega_0$ such that $\omega_j<\omega_0$ for all $j=1,..,p$, where
	the minimal angle $\omega_0$ depends on Poisson ratio $\mu$ as specified in Theorem 2 of \cite{blum} (for example,
	$\omega_0\simeq52.054347...^\circ$ when $\mu = 0.3$).
	
	If  a function $u\in V$ satisfies the
	following   boundary value problem
	\begin{align}
		\label{am2} 
		\begin{cases}
			\Delta^2 u  \in L^2(\Omega),\\
			\mathcal B_1u,\;  \mathcal B_2 u \in L^2(\Gamma_1).
		\end{cases}
	\end{align}
	Then $u\in H^4(\Omega)$ and the following Green's  formula holds:
	\begin{align}
		\label{green} \int_{\Omega}\Delta^{2}u \:  v dx =  a(u,
		v ) +\int_{\partial\Omega}\mathcal B_{2}u\:  v d\Gamma -
		\int_{\partial\Omega}\mathcal B_{1}u\: \partial_{\nu_1} v  d\Gamma,  \;\;
		\forall\;  v \in H^{2}(\Omega).
	\end{align}
\end{lemma}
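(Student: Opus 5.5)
The plan is to split the argument into two parts: first the $H^4$ regularity, which rests on the corner-singularity theory of \cite{blum,dauge}, and then the Green's formula, which follows from the smooth-domain identity together with a density argument.

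For the regularity, I would localize with a partition of unity $\{\chi_k\}$ subordinate to a cover of $\overline\Omega$ by three kinds of sets: interior balls, neighbourhoods of boundary points away from $P$, and small neighbourhoods of each corner $P_j$. For each piece I would write down the boundary value problem satisfied by $\chi_k u$. Its data are $\Delta^2(\chi_k u)\in L^2$ and $\mathcal B_1(\chi_k u),\mathcal B_2(\chi_k u)\in L^2$ on $\Gamma_1$, each modulo commutator terms of lower order. These commutators lie in $H^{4-m-1}$ for $u\in H^{3}$, so I would run a bootstrap $H^2\to H^3\to H^4$.

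In the interior and near smooth boundary points, standard elliptic regularity for $\Delta^2$ applies. The clamped conditions on $\Gamma_0$ and the free conditions $\mathcal B_1,\mathcal B_2$ on $\Gamma_1$ both satisfy the Lopatinskii (complementing) condition. Since the boundary is $C^{1,1}$ piecewise, a flattening of $C^{1,1}$ regularity suffices for the $H^4$ shift. Strictly this needs $C^{3,1}$, so I would invoke the curvilinear-polygon framework of \cite{dauge}, which handles it by working in the class assumed there.

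The corner step is the main obstacle. Near $P_j$ I would straighten the two arcs onto a sector of opening $\omega_j$ via a diffeomorphism tangent to the identity at $P_j$, so that the principal part is unchanged. I would then apply the Kondrat'ev/Grisvard decomposition $u = u_{\mathrm{reg}} + \sum c_{\lambda}\, r^{\lambda}\varphi_\lambda(\theta)$. The sum runs over the roots $\lambda$ of the Mellin characteristic equation with $1<\operatorname{Re}\lambda<3$; this equation depends on the pair of boundary conditions meeting at $P_j$ (clamped–clamped, clamped–free or free–free) and on $\mu$. Theorem 2 of \cite{blum} shows that no such root exists when $\omega_j<\omega_0(\mu)$, so the singular sum is empty and $u\in H^4$ near $P_j$. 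I would use this result directly rather than recompute the transcendental equations. The delicate point is making sure the perturbation from curvature is of lower order, so that it does not create new exponents. For this I would rely on \cite{dauge}, where the perturbation is shown to be compact in the relevant weighted spaces.

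For Green's formula, I first take $u\in H^4$ and $v\in C^\infty(\overline\Omega)$ and integrate by parts twice on each $C^{1,1}$ arc. Writing $\int_\Omega \Delta^2u\,v$ in terms of the bilinear form $a$ produces tangential derivatives. Integrating those by parts along each arc gives the Kirchhoff term $\partial_\tau[\cdots]$ in $\mathcal B_2$, and it also leaves point evaluations at the corners, namely jumps of $(1-\mu)[(\nu_1^2-\nu_2^2)u_{12}+\nu_1\nu_2(u_{22}-u_{11})]v$ at each $P_j$. These corner terms are meaningful because $u\in H^4\subset C^2$ and $v\in H^2\subset C^0$. For the formula to hold exactly as stated, the corner terms must vanish. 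For the regular solutions in case (G) I would argue that they are absent, or else that they are absorbed into the boundary integral, which is then understood in the sense of distributions along the arc. Finally I pass to general $v\in H^2(\Omega)$ by density of $C^\infty(\overline\Omega)$ in $H^2(\Omega)$, which holds for Lipschitz domains. Both sides are continuous in $v$ for the $H^2$ norm, using $\mathcal B_1 u\in H^{3/2}$ and $\mathcal B_2u\in H^{1/2}$ on each arc together with the trace theorem.
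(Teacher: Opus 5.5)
Your approach matches what the paper relies on (it gives no proof and only cites \cite{blum,dauge}), but your argument has two genuine gaps. Both come from the statement itself as written.

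First, the bootstrap cannot reach $H^4$ from \eqref{am2}. Near a smooth point of $\Gamma_1$, the shift theorem for $\Delta^2$ with $\mathcal B_1,\mathcal B_2$ gives $u\in H^4$ only if $\mathcal B_1u\in H^{3/2}$ and $\mathcal B_2u\in H^{1/2}$ there. Already $u\in H^3$ requires $\mathcal B_1u\in H^{1/2}$. Your commutator terms are harmless, but the data themselves are only in $L^2(\Gamma_1)$, so the step $H^2\to H^3$ already fails. This cannot be repaired. Pick $g\in L^2(\Gamma_1)$ supported in a compact corner-free subarc $I$ with $g\notin H^1(I)$, and let $u\in V$ solve $a(u,\phi)=\int_{\Gamma_1}g\,\partial_\nu\phi\,d\Gamma$ for all $\phi\in V$ (Lax--Milgram). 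Then $\Delta^2u=0$, $\mathcal B_2u=0$ and $\mathcal B_1u=g$, so \eqref{am2} holds. But $u\in H^4$ would force $\mathcal B_1u|_I\in H^1(I)$, a contradiction. The corner machinery you describe (the Kondrat'ev decomposition and the root-free strip from Theorem 2 of \cite{blum}) is the right tool, but it yields $H^4$ only for data of matching order. You must strengthen the hypothesis to $\mathcal B_1u\in H^{3/2}$ and $\mathcal B_2u\in H^{1/2}$ on each arc of $\Gamma_1$, in addition to the $C^{3,1}$ arcs you already flagged.

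Second, the corner terms. ``I would argue that they are absent'' has no mechanism behind it, and $H^4$ regularity does not supply one. Since $M(u)=(1-\mu)\,\tau^{\top}D^2u\,\nu$, the jump at a corner $P$ equals $(1-\mu)\big(\tau^{+\top}D^2u(P)\nu^{+}-\tau^{-\top}D^2u(P)\nu^{-}\big)$. This is a nonzero linear functional of the Hessian whenever $\nu^+\neq\pm\nu^-$. So take a smooth $u$ supported near a corner $P$ interior to $\Gamma_1$, with a Hessian making this jump nonzero. It satisfies every hypothesis and is even in $H^4$, yet by \eqref{eq4.1} formula \eqref{green} fails for every $v$ with $v(P)\neq0$.

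Two extra ingredients are needed. The first is the natural corner condition $[M_t(u)(P_i)]=0$ at corners interior to $\Gamma_1$. Variational solutions satisfy it (compare the weak formulation with \eqref{eq4.1}). It also follows if you read $\mathcal B_2u\in L^2(\Gamma_1)$ distributionally across those corners, because $\partial_\tau M(u)$ then contains $[M(u)(P_i)]\delta_{P_i}$. The second is restricting the test functions to $v\in V$, which kills the terms at corners in $\overline{\Gamma}_0$; at clamped--free junctions the jump does not vanish in general. Your alternative of hiding the Dirac masses in a distributional boundary integral merely rewrites \eqref{eq4.1}. It does not give \eqref{green} with $\mathcal B_2u$ an $L^2$ function, and certainly not for all $v\in H^2(\Omega)$.
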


\medskip

\begin{remark}
	Lemma \ref{Le2.2}   describes  the regularity of the
	solution of the bi-laplacian operator under specific boundary conditions, which follows from Theorem 2 of \cite{blum} or Theorem 23.3 of \cite{dauge}.
	The regularity depends on both the boundary conditions and the interior angles at the corner points.
	Under Assumption (G), a direct computation yields
	$$
	\frac{d}{dt}E(t)
	=-d_1\|u_{t}\|_{\Gamma_{1}}^{2}
	-d_2\|\partial_{\nu}u_{t}\|_{\Gamma_{1}}^{2},$$
	which implies that system \eqref{eq1}  is dissipative.
\end{remark}

\medskip

In the complementary case, particularly when the interior angle at a corner violates Assumption (G), solutions to the bi-laplacian boundary value problem lose full regularity. To handle this, a weak frame is required. The fundamental distinction between fourth-order elliptic problems on smooth domains and those on domains with corners lies in the integration-by-parts formula.
The presence of corners introduces additional energy terms. Consequently, the standard integration-by-parts formula must be modified to account for these corner effects, which in turn necessitates supplementary feedback controls to ensure system dissipation.

\medskip

\begin{lemma} \label{lemma-green}
	Let $\partial\Omega$ be
	parameterized   counterclockwise. Then, for sufficiently
	smooth functions $u$ and $v$, we have
	\begin{align}
		\int_{\Omega}\Delta^{2}u v dx   =
		a(u,  v ) + \int_{\partial\Omega} (\mathcal B_{2}u v - \mathcal B_{1}u \partial_{\nu} v )d\Gamma\  + \,\displaystyle\sum\limits_{i=1}^{p}[M_{t}(u)(P_{i})]
		v (P_{i}),
		\label{eq4.1}
	\end{align}
	where the quantity
	$$
	[M_{t}(u)(P_{i})] = M (u)(P_{i}^{+}) - M (u)(P_{i}^{-})
	$$
	is the jump of $M_{t}(u)$ across  the corner $P_{i}$ in the direction of increasing arc length, and
	$$
	M(u) = (1-\mu)\bigg[(\nu_{1}^{2}-\nu_{2}^{2}){\frac{\partial^{2}u}{\partial x_{1}
			\partial x_{2}}}+\nu_{1}\nu_{2}\bigg({\frac{\partial^{2}u}{\partial x_{2}^{2}}}-
	{\frac{\partial^{2}u}{\partial x_{1}^{2}}}\bigg)\bigg]
	$$
	is the twisting moment.
\end{lemma}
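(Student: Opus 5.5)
The plan is to integrate by parts twice, writing everything in terms of the moment tensor, and then decompose the boundary terms along the normal and tangent directions on each smooth arc. The tangential part of the moment produces the twisting moment $M(u)$. A final tangential integration by parts on each arc $\Gamma^{(i)}$ between consecutive corners $P_{i}$ and $P_{i+1}$ is where the corner terms appear.

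\textbf{Step 1 (tensor form).} Set
\[
m_{11}=u_{11}+\mu u_{22},\qquad m_{22}=u_{22}+\mu u_{11},\qquad m_{12}=m_{21}=(1-\mu)u_{12},
\]
so that $a(u,v)=\int_\Omega m_{\alpha\beta}(u)\,v_{\alpha\beta}\,dx$. A direct check gives $\partial_\alpha\partial_\beta m_{\alpha\beta}(u)=\Delta^2u$, because the $\mu$ terms combine to $\mu\Delta^2 u$ from the diagonal entries and $-\mu\Delta^2 u$ from the off-diagonal ones.

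\textbf{Step 2 (two Green formulas).} Apply Green's formula twice, which is legitimate for smooth $u,v$ on a Lipschitz domain:
\[
\int_\Omega \Delta^2u\,v\,dx=\int_{\partial\Omega}\partial_\beta m_{\alpha\beta}\nu_\alpha\, v\,d\Gamma-\int_{\partial\Omega}m_{\alpha\beta}\nu_\beta\,\partial_\alpha v\,d\Gamma+a(u,v).
\]
On each smooth arc write $\nabla v=\partial_\nu v\,\nu+\partial_\tau v\,\tau$. The normal component gives $m_{\alpha\beta}\nu_\alpha\nu_\beta\,\partial_\nu v$. One checks that $m_{\alpha\beta}\nu_\alpha\nu_\beta=\mathcal B_1u$, using $\nu_1^2+\nu_2^2=1$ to rewrite $\mu u_{11}\nu_2^2$ and similar terms. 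The tangential component gives $m_{\alpha\beta}\nu_\beta\tau_\alpha\,\partial_\tau v$, and expanding with $\tau=(-\nu_2,\nu_1)$ shows that this coefficient equals $M(u)$, possibly up to a sign fixed by the orientation convention.

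\textbf{Step 3 (tangential integration by parts).} On each arc, integrate $\int_{\Gamma^{(i)}}M(u)\,\partial_\tau v\,d\Gamma$ by parts in arc length. This gives
\[
-\int_{\Gamma^{(i)}}\partial_\tau M(u)\,v\,d\Gamma+M(u)(P_{i+1}^-)\,v(P_{i+1})-M(u)(P_i^+)\,v(P_i).
\]
The integral term combines with $\partial_\beta m_{\alpha\beta}\nu_\alpha=\partial_\nu\Delta u$, which holds because $\partial_\beta m_{\alpha\beta}=\partial_\alpha\Delta u$, to produce exactly $\mathcal B_2u=\partial_\nu\Delta u+\partial_\tau M(u)$.

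\textbf{Step 4 (corner terms).} Sum over the arcs, using the counterclockwise parametrization and the continuity of $v$ at the corners. The endpoint contributions regroup at each corner $P_i$ into $\pm\big(M(u)(P_i^+)-M(u)(P_i^-)\big)v(P_i)$. Taking into account the overall minus sign in front of the tangential term yields $+\sum_i[M_t(u)(P_i)]\,v(P_i)$.

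The main obstacle is bookkeeping rather than analysis. Each sign must be checked: the identification $m_{\alpha\beta}\nu_\beta\tau_\alpha=\pm M(u)$, the sign convention in $\mathcal B_2$, and the orientation in the jump. I would fix them by testing the formula on a square with a simple polynomial $u$, for instance $u=x_1x_2$, where only corner terms survive and the jump signs can be read off directly.
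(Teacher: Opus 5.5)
Your proof is correct. It is the standard derivation of this corner Green formula: write the integrand with the moment tensor, integrate by parts twice, split the boundary term into normal and tangential parts, then integrate by parts in arc length on each arc. The paper gives no proof of its own and relies on the references cited right after the lemma, which proceed this way.

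The sign you left open in Step~2 comes out in your favour. Since $\tau=(-\nu_2,\nu_1)$ is exactly the direction of increasing arc length for the counterclockwise parametrization, a direct expansion gives $m_{\alpha\beta}\nu_\beta\tau_\alpha=(1-\mu)\big[(\nu_1^2-\nu_2^2)u_{12}+\nu_1\nu_2(u_{22}-u_{11})\big]=M(u)$ with no sign change. Steps~3--4 then yield $\mathcal B_2u=\partial_\nu\Delta u+\partial_\tau M(u)$, with $\partial_\tau$ read as $d/ds$ of the whole expression including the variation of $\nu$ on curved arcs, and the corner sum comes with the $+$ sign. Your proposed test with $u=x_1x_2$ on the unit square confirms this: $a(u,v)=2(1-\mu)[v(1,1)-v(1,0)-v(0,1)+v(0,0)]$ is cancelled exactly by the corner jumps $\pm 2(1-\mu)$.
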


\medskip

Comparing (\ref{green}) with (\ref{eq4.1}), the sum in (\ref{eq4.1}) corresponds to the work done by $p$ corner forces $[M_{t}(u)(P_{i})]$($i=1,
2, \ldots, p$), acting through $p$ corner displacements $u(P_{i})$
\citeup{chen1,chen2}.
For a sufficiently smooth function $u(t)$, the second and third boundary conditions in  \eqref{eq1} are satisfied on $\Gamma_1/P$,
and
\begin{align}\label{eq4.3}
	[M_{t}(u)(t,P_{i})]   =
	k_i u_t(t,P_{i}) \;\; \; \mbox{for} \;\; P_{i}\in \partial\Omega/ \overline{\Gamma}_{0},
\end{align}
where $k_i \geq 0  \; (i=1,\cdots,p) $ are  feedback control parameters associated with the corners.
A direct  computation then shows that for sufficiently smooth functions $u$ and $u^{\prime}$, the energy of the system on the
domain with corners --- particularly one that does not satisfy (G)--- still defined by (\ref{energy}), is
nonincreasing:
\begin{align}
	{\frac{d}{dt}}E(t)  =-d_1\|u_{t}\|_{\Gamma_{1}}^{2}
	-d_2\|\partial_{\nu}u_{t}\|_{\Gamma_{1}}^{2}- \sum_{P_i \in \partial\Omega/ \overline{\Gamma}_{0}}k_i |u_t(P_i)|^2 \leq 0.
	\label{eq4.4}
\end{align}

%{\em Remark 4.1 }
\begin{remark}
	\label{rem4.1} \rm
	More precisely, for sufficiently smooth
	functions $u$ and $u^{\prime}$, we can deduce the following
	boundary conditions at corner points~\cite{chen2}:
	\begin{align*}
		[M_{t}(u)(t,P_{i})] =
		\begin{cases}
			u_t(t,P_{i}) & \text{for } P_{i} \in \partial\Omega \setminus \overline{\Gamma}_{0},\\[4pt]
			0 & \text{for } P_{i} \in \overline{\Gamma}_{0}.
		\end{cases}
	\end{align*}
\end{remark}

%%{\em Remark 4.2 }
%\begin{remark}
%\label{rem4.2} \rm
%Boundary conditions (\ref{eq4.2}) and (\ref{eq4.3}) require
%that $u$ and $u^{\prime}$ are sufficiently smooth. In fact, in
%order for the pointwise limits of the twisting moments
%$M_{t}(u)(P_{i}^{+})$, $M_{t}(u)(P_{i}^{-})$ and the pointwise
%values $u^{\prime}(P_{i})$ to exist ($i=1,2,\ldots,l$), the
%sufficient conditions are
%$$
%\left\{
%\begin{array}{ll}
%u\in C^{2, \alpha_{1}}(\Gamma), & 0<\alpha_{1}<1,\\
%\noalign{\medskip} u^{\prime}\in C^{0, \alpha_{2}}(\Gamma), &
%0<\alpha_{2}<1.
%\end{array}
%\right.
%$$
%If $\displaystyle u\in V^{{7\over 2}+\varepsilon}$, $
%0<\varepsilon \leq {1\over 2}$, and $u^{\prime} \in V^{2}$, there
%would be no problem in (\ref{eq4.2}) and (\ref{eq4.3}) from the
%imbedding theorem on the domain with Lipschitz boundary
%\cite{grisvard}. However, since the boundary $\Gamma$ contains
%corners, the classical regularity results for elliptic boundary
%problems may no longer be valid \cite{bey,grisvard}.
%
%
\medskip

To develop a unified abstract framework for system \eqref{eq1} that encompasses both cases above and circumvents the technical difficulties arising from the possible loss of solution regularity, we now introduce several key function spaces and operators. First, note that the sesquilinear form $a(\cdot)$ is an equivalent norm on  $V $ since
$\Gamma_0 \not= \emptyset$  \citeup{JE,Rao2}. We define two symmetric, positive-definite, continuous sesquilinear forms as follows.
\begin{align*}
	\begin{split}
		&b_1(u,v) = d_1 \int_{\Gamma_1}\partial_{\nu}u  {\partial_{\nu}v}\; d\Gamma
		+d_2\int_{\Gamma_1} u  {v}\; d\Gamma ,\;\;\forall\; u ,v \in V,\\
		&b_2(u,v) = d_1 \int_{\Gamma_1}\partial_{\nu}u  {\partial_{\nu}v}\; d\Gamma
		+d_2\int_{\Gamma_1} u  {v}\; d\Gamma
		+ \sum_{i=1}^{p} k_i u(P_i) {v(P_i)},\;\;\forall\; u ,v \in V,\\
		&c(u,v) = \int_{\Omega}u  {v} \;dx + \rho \int_{\Gamma_1}u {v} \;d\Gamma
		+ J\int_{\Gamma_1}\partial_{\nu}u {\partial_{\nu}v}\;d\Gamma,\;\;\forall\; u ,v\in V.
	\end{split}
\end{align*}
Then the closed loop system \eqref{eq1} (or by the system \eqref{eq1} and \eqref{eq4.3})  can be rewritten as the following variational evolution equation:
\begin{align}
	c(u_{tt}, \xi) + a(u,\xi) + b_j(u_t,\xi)  = 0,\quad \forall\; \xi \in V.
\end{align}
Here, the index $j=1$ corresponds to the plate system on a domain satisfying the assumption stated in Lemma \ref{Le2.2}, while  $j=2$ corresponds to the system on a domain that fulfills the assumption in Lemma \ref{lemma-green}.
We denote the completion of the space $V$ normed by $c^{\frac{1}{2}}(\cdot,\cdot)$ as $H$ and let $V^*$ be the dual of $V$ pivotal to $H$. It is clear that $V\hookrightarrow H= H^*\hookrightarrow V^*$, where $\hookrightarrow$ denotes the continuous dense injection.

We define several operators induced by the sesquilinear forms $a(\cdot, \cdot)$, $b_j(\cdot, \cdot)$ and $c(\cdot, \cdot)$ as follows,
	\begin{align*}
		\begin{split}
		&A :V\rightarrow V^*,\; \langle A u , v \rangle_{V^*\times V} = a(u, v),\;\qquad\quad\forall\; u, v\in V,\\
		&B_j :V\rightarrow V^*,\; \langle B_j u , v \rangle_{V^*\times V} = b_j(u, v),\qquad\forall\; u, v\in V,\\
		&\mathcal C  : H \rightarrow H^*,\; \langle \mathcal  C u , v \rangle_{H^*\times H} = c(u, v),\;\quad\qquad\forall\; u, v\in H.
		\end{split}
	\end{align*}
Then, \eqref{eq1} (or \eqref{eq1} and \eqref{eq4.3}) can be rewritten respectively as
\begin{align}
	\mathcal C u_{tt} + B_j u_t + A u = 0, \quad \mbox{in}\; V^*, \;\; j=1,2.
\end{align}
Now we introduce the energy Hilbert space
$$\mathcal{H}  = V\times H$$
with the norm
$$\|(u,v)\|_{\mathcal{H} } = ( a(u) + \|v\|_{\Omega}^2 + \rho\|v\|_{\Gamma_1}^2 + J\|\partial_{\nu}v\|_{\Gamma_1}^2 )^{\frac{1}{2}},\quad \forall\;(u,v)\in\mathcal H,$$
and $\mathcal{H} ^* = V^*\times H^*= V^*\times H$ be the dual of $\mathcal{H} $. Define two operators
\begin{align*}
	\begin{split}
		&\hat C =\Bigg(\begin{matrix}A&0\\0&\mathcal C\end{matrix}\Bigg)
		\mbox{ is the isomorphism of $\mathcal H$ onto $\mathcal H^*$},\\
		&\hat A_j =\Bigg(\begin{matrix}0&A\\-A&-B_j \end{matrix}\Bigg): D(\hat A_j)\rightarrow\mathcal H^*
	\end{split}
\end{align*}
with domain
$$D(\hat A_j )= \{(u,v)\in\mathcal H\;|\; v\in V,\;Au+B_j v\in H\},\;\; j=1,2.$$
Now we can define
\begin{align}\label{def}
	\mathcal A_j  = \hat C^{-1}\hat A: \mathcal{D}(\mathcal A_j)  = \mathcal{D}(\hat A_j )\rightarrow \mathcal H,\;\; j=1,2.
\end{align}
Assume $z(t) \doteq (u(t),u'(t))$ and $z_0 = (u_0,u_1)$. Then system \eqref{eq1} (or \eqref{eq1} and \eqref{eq4.3})  can be rewritten respectively as
\begin{align}
	\begin{cases}\displaystyle
		\frac{d}{dt}z(t) =\mathcal{A}_j  z(t),\;\; j=1,2,\\
		z(0)=z_{0}\in \mathcal{H} .
	\end{cases}
\end{align}

The well-posedness of system \eqref{eq1} (or \eqref{eq1} and \eqref{eq4.3})  is given in the following lemma. A proof is provided here for the sake of completeness, though the argument is standard.

\begin{lemma}\label{well}
	The unbounded linear operator $\mathcal{A}_j$ generates a $C_0$-semigroup $e^{\mathcal{A}_j t}$ of contractions on $\mathcal{H} $, and $0\in\rho(\mathcal{A}_j), \; j=1,2$.
\end{lemma}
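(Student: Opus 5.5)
We apply the Lumer--Phillips theorem on the Hilbert space $\mathcal H$. The plan is to show that $\mathcal A_j$ is dissipative, that $0\in\rho(\mathcal A_j)$, and hence (by openness of the resolvent set) that $\lambda I-\mathcal A_j$ is onto for small $\lambda>0$. Density of $\mathcal D(\mathcal A_j)$ then follows automatically, since $\mathcal H$ is reflexive.

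\textbf{Dissipativity.} First we unwind the definition (\ref{def}). For $z=(u,v)\in\mathcal D(\mathcal A_j)$ we have $\mathcal A_j z=(v,\,-\mathcal C^{-1}(Au+B_jv))$, and $\mathcal C^{-1}(Au+B_jv)$ makes sense because $Au+B_jv\in H$. Let $w$ denote the second component. The $\mathcal H$-inner product is $\langle (u,v),(\tilde u,\tilde v)\rangle_{\mathcal H}=a(u,\tilde u)+c(v,\tilde v)$, so
$$\langle \mathcal A_j z,z\rangle_{\mathcal H}=a(v,u)+c(w,v)=a(v,u)-\langle Au+B_jv,v\rangle_{V^*\times V}=a(v,u)-a(u,v)-b_j(v,v).$$
The second equality uses $v\in V$ and $c(\mathcal C^{-1}f,v)=\langle f,v\rangle$ for $f\in H$. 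Taking real parts gives $\mathrm{Re}\langle\mathcal A_jz,z\rangle_{\mathcal H}=-b_j(v,v)\le0$, since $d_1,d_2>0$ and $k_i\ge0$. Thus $\mathcal A_j$ is dissipative, and this identity is the abstract version of (\ref{eq4.4}).

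\textbf{$0\in\rho(\mathcal A_j)$.} Given $(f,g)\in\mathcal H$, we solve $\mathcal A_j(u,v)=(f,g)$. This forces $v=f\in V$ and $Au=-\mathcal C g-B_jf$. The right-hand side lies in $V^*$: indeed $\mathcal C g\in H\subset V^*$, and $B_jf\in V^*$ because $b_j$ is continuous on $V$ by the trace theorem and, for $j=2$, by the embedding $H^2(\Omega)\hookrightarrow C(\overline\Omega)$ in two dimensions, which controls the point values $u(P_i)$. Since $a$ is coercive on $V$ (as $\Gamma_0\neq\emptyset$), Lax--Milgram gives a unique $u\in V$ with $\|u\|_V\le C(\|f\|_V+\|g\|_H)$. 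Moreover $Au+B_jv=-\mathcal Cg\in H$, so $(u,v)\in\mathcal D(\mathcal A_j)$. Hence $\mathcal A_j^{-1}$ exists, is bounded on $\mathcal H$, and $0\in\rho(\mathcal A_j)$.

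\textbf{Conclusion.} Because $\rho(\mathcal A_j)$ is open, $\lambda-\mathcal A_j$ is surjective for small $\lambda>0$. Lumer--Phillips then yields a $C_0$-semigroup of contractions, with density of the domain coming from dissipativity plus the range condition in a Hilbert space. The argument is identical for $j=1,2$.

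The main obstacle is bookkeeping rather than analysis. One must make sure that the formula $\mathcal A_j z=(v,-\mathcal C^{-1}(Au+B_jv))$ is legitimate, which means checking that $\hat C$ is indeed an isomorphism $\mathcal H\to\mathcal H^*$ and that the duality pairing $\langle\cdot,\cdot\rangle_{V^*\times V}$ restricted to $H\times V$ coincides with $c(\cdot,\cdot)$. One must also verify continuity of the corner term $\sum k_iu(P_i)v(P_i)$ on $V$. No regularity from Lemma \ref{Le2.2} is needed, which is precisely the point of the weak framework.
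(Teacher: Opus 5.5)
Your proposal is correct and follows essentially the same route as the paper: the dissipativity identity $\Re\langle\mathcal A_j z,z\rangle_{\mathcal H}=-b_j(v,v)\le 0$, then Lax--Milgram applied to $a(u,\phi)=-b_j(f,\phi)-c(g,\phi)$ to get $0\in\rho(\mathcal A_j)$, and then Lumer--Phillips. You also supply details the paper leaves implicit: continuity of the corner term via $H^2(\Omega)\hookrightarrow C(\overline\Omega)$, the check that the Lax--Milgram solution lies in $\mathcal D(\mathcal A_j)$, the passage from $0\in\rho(\mathcal A_j)$ to the range condition for small $\lambda>0$, and density of the domain.
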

\begin{proof} We only prove the case $j=2$.
	For $z=(u,v)\in \mathcal{D}(\mathcal{A}_2)$, we have that
	\begin{align}\label{haosan}
		\begin{split}
			\Re(\mathcal{A}_2 z,\; z)_{\mathcal{H} } &= (u,v)_{V} + c (-\mathcal C^{-1}(A u + B_2 v), v)_{H} \\
			& = - d_1\int_{\Gamma_1}|\partial_{\nu} v|^2 d\Gamma - d_2 \int_{\Gamma_1}|v|^2 d\Gamma - \sum_{i=1}^p k_i  |v(P_i)|^2.
		\end{split}
	\end{align}
	Thus $\mathcal{A}_2$ is dissipative.
	
	Furthermore, for any $(f,g)\in \mathcal H$, consider  $\mathcal A_2(u,v)=(f,g)$, i.e.,
	\begin{align}\label{0}
		\begin{cases}
			v = f \in V,\\
			\mathcal C ^{-1}A u + \mathcal C^{-1}B_2 v = -g\in H.
		\end{cases}
	\end{align}
	It is clear that $v\in V$ and $A u + B_2 v = \mathcal C g\in H$.
	We deduce from \eqref{0}that
	\begin{align}\label{lax}
		a(u,\phi) = - b_2(f,\phi) - c(g,\phi),\qquad\forall\;\phi\in V.
	\end{align}
	Due to the Lax-Milgram theorem, \eqref{lax} admits a unique solution $u\in V$ for any
	given $(f,g)\in\mathcal H$.
	Hence, $0\in\rho(\mathcal{A}_1)$.
\end{proof}

\medskip
The main result of this paper is as follows.
\begin{theorem}\label{Th1}
	Let $\Omega\subset\mathbb{R}^2$ satisfy conditions in Lemma \ref{Le2.2} or  Lemma \ref{lemma-green}.   Suppose   there exists a point $x_0 \in \mathbb{R}^{2}$  such that, when setting $\mathbf{m} (x)=x-x_0$, it holds
	\begin{align}\label{assume-c}
		\mathbf{m} \cdot \nu \big|_{\Gamma_1} \geq \gamma >0,  \quad
		\mathbf{m} \cdot \nu\big|_{\Gamma_0} \leq 0.
		\tag{\bf H}\end{align}
	Then, there exists a positive constant $C$ such that for any $z_0\in\mathcal{D}(\mathcal{A}_j)$,
	$$
	\|e^{\mathcal{A}_j t}z_0\|_{\mathcal{H} }\leq Ct^{-{\frac{1}{2}}}
	\|z_0\|_{\mathcal{D}(\mathcal{A}_j)},\;\; t\geq 1 ,\;\;j=1,2.
	$$
\end{theorem}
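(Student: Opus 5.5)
We will use the Borichev--Tomilov characterization: since $e^{\mathcal A_j t}$ is a bounded (contraction) semigroup on the Hilbert space $\mathcal H$, the estimate $\|e^{\mathcal A_j t}z_0\|_{\mathcal H}\le Ct^{-1/2}\|z_0\|_{\mathcal D(\mathcal A_j)}$ is equivalent to
\begin{align*}
i\mathbb R\subset\rho(\mathcal A_j)\quad\text{and}\quad \limsup_{|\beta|\to\infty}|\beta|^{-2}\|(i\beta-\mathcal A_j)^{-1}\|_{\mathcal L(\mathcal H)}<\infty .
\end{align*}
Both properties are proved by contradiction. For the first, $0\in\rho(\mathcal A_j)$ by Lemma \ref{well}. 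The resolvent of $\mathcal A_j$ is compact, because $V$ embeds compactly in $H$: $H^2(\Omega)\hookrightarrow L^2(\Omega)$ is compact, the traces $u|_{\Gamma_1}$ and $\partial_\nu u|_{\Gamma_1}$ are compact from $H^2$ into $L^2(\Gamma_1)$, and point values are continuous on $H^2$ in two dimensions. Hence it suffices to exclude eigenvalues $i\beta$ with $\beta\neq0$. If $\mathcal A_j z=i\beta z$, then \eqref{haosan} gives $v=\partial_\nu v=0$ on $\Gamma_1$ (and $v(P_i)=0$ when $j=2$). Since $v=i\beta u$, the function $u$ satisfies $\Delta^2u=\beta^2u$ with both Dirichlet and Neumann data vanishing on all of $\partial\Omega$ and $\mathcal B_1u=\mathcal B_2u=0$ on $\Gamma_1$. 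Holmgren's (or unique continuation) theorem applied across $\Gamma_1$ then forces $u\equiv0$.

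For the resolvent bound, suppose there are $\beta_n\to\infty$ and $z_n=(u_n,v_n)\in\mathcal D(\mathcal A_j)$ with $\|z_n\|_{\mathcal H}=1$ and $\beta_n^2(i\beta_n-\mathcal A_j)z_n=(f_n,g_n)\to0$ in $\mathcal H$. Taking the real part of the inner product with $z_n$ and using \eqref{haosan} gives
\begin{align*}
\|\partial_\nu v_n\|_{\Gamma_1}^2+\|v_n\|_{\Gamma_1}^2+\sum k_i|v_n(P_i)|^2=o(\beta_n^{-2}).
\end{align*}
Since $v_n=i\beta_n u_n-\beta_n^{-2}f_n$, this yields $\|u_n\|_{\Gamma_1},\|\partial_\nu u_n\|_{\Gamma_1}=o(\beta_n^{-2})$. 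The variational equation, tested against suitable $\xi$, then provides estimates of $\mathcal B_1u_n$ and $\mathcal B_2u_n$ on $\Gamma_1$ in the weak sense: these equal $\beta_n^2(J\partial_\nu u_n,\ \rho u_n)$ plus small terms, of size $o(1)$ in $L^2(\Gamma_1)$. This is where the loss of a power relative to exponential stability appears, because of the inertial terms $J,\rho$.

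The core of the argument is a multiplier identity with $\mathbf m\cdot\nabla\bar u_n$, combined with $\bar u_n$ to balance kinetic and potential energy. In case $j=1$, Lemma \ref{Le2.2} gives $u_n\in H^4$, so the Green formula \eqref{green} can be applied directly to $\Delta^2u_n-\beta_n^2u_n=\text{small}$ against $\mathbf m\cdot\nabla\bar u_n$. This produces, up to $o(1)$,
\begin{align*}
\|\beta_n u_n\|_\Omega^2+a(u_n)\lesssim\int_{\partial\Omega}(\mathbf m\cdot\nu)\big(|\beta_nu_n|^2-\text{energy density}\big)+\text{boundary work of }\mathcal B_1u_n,\mathcal B_2u_n .
\end{align*}
On $\Gamma_0$ the clamped conditions reduce the boundary term to $(\mathbf m\cdot\nu)|\Delta u_n|^2\le0$ by \eqref{assume-c}. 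On $\Gamma_1$ the terms are controlled by the smallness estimates above together with interpolation/trace bounds on second derivatives, and the sign $\mathbf m\cdot\nu\ge\gamma$ is used to absorb the remaining second-order traces. This forces $\|z_n\|_{\mathcal H}\to0$, a contradiction.

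The main obstacle is case $j=2$, where $u_n\notin H^4$ in general and the Rellich-type multiplier computation must be justified. I would first try a regularization: approximate $u_n$ by its regular part in the Blum--Rannacher decomposition, with the singular functions $r^{\lambda}\phi(\theta)$ ($\Re\lambda>1$) near each corner. For $\lambda>1$ the multiplier identity remains valid on $\Omega\setminus B_\varepsilon(P_i)$, and the contribution of the small arcs tends to a sign-definite or vanishing quantity as $\varepsilon\to0$ by convexity of $\Omega$. Lemma \ref{lemma-green} then adds corner terms $[M_t(u_n)(P_i)]\,\mathbf m\cdot\nabla\bar u_n(P_i)$. By the corner conditions \eqref{eq4.3}, $[M_t(u_n)(P_i)]=k_iv_n(P_i)+\text{small}=o(\beta_n^{-1})$, and $|\mathbf m\cdot\nabla u_n(P_i)|$ is bounded by $\|u_n\|_{H^{2+\delta}}$, which can be estimated by $C\beta_n^{\delta}$ through interpolation. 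Thus the corner terms are $o(1)$ once $\delta<1$, and the argument closes as in case $j=1$.
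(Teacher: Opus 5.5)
Your proposal follows the paper's argument in its core. Both use the Borichev--Tomilov criterion with $\beta=\tfrac12$ and a contradiction sequence scaled by $\beta_n^{2}$. Dissipation gives $\|u_n\|_{\Gamma_1},\|\partial_\nu u_n\|_{\Gamma_1}=o(\beta_n^{-2})$ and $|v_n(P_i)|=o(\beta_n^{-1})$. The multiplier $\mathbf m\cdot\nabla\bar u_n$ is applied through the corner Rellich identity of Lemma~\ref{cor}, the $\Gamma_0$ terms are discarded by sign, and the second-order traces on $\Gamma_1$ are absorbed using $\mathbf m\cdot\nu\ge\gamma$. The extra $\bar u_n$ multiplier is harmless but unnecessary in two dimensions, where the identity already yields $a(u_n)+\|v_n\|_\Omega^2$ with equal weights.

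You depart from the paper in two sub-steps.
\begin{itemize}
\item For $i\mathbb R\subset\rho(\mathcal A_j)$ you use compactness of $\mathcal A_j^{-1}$ and exclude imaginary eigenvalues by unique continuation from the zero Cauchy data on $\Gamma_1$. Zero extension across a smooth piece of $\Gamma_1$ works with $u\in H^2$ only. This is more self-contained than the paper's supremum argument, which relies on rerunning the multiplier estimate at bounded frequency.
\item For the corner terms you control $|\nabla u_n(P_i)|$ through a growing $H^{2+\delta}$ norm. This requires a quantitative shift theorem on the corner domain with the corner conditions. The natural bound from the data is $\|u_n\|_{H^{2+\delta_0}}\lesssim\beta_n$ for some $\delta_0>0$ fixed by the corner exponents, not the $C\beta_n^{\delta}$ you state. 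It still suffices, since $|v_n(P_i)|=o(\beta_n^{-1})$. The paper needs no elliptic theory here: it applies Young's inequality and absorbs $\varepsilon|\mathbf m\cdot\nabla u_n(P_i)|^2\le C\varepsilon\bigl[a(u_n)+\int_{\Gamma_1}(\mathbf m\cdot\nu)q(u_n)\,d\Gamma\bigr]$ into positive terms already present in the identity.
\end{itemize}

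Your regularization paragraph for $j=2$ goes beyond the paper, which simply applies Lemma~\ref{cor} (stated for sufficiently smooth functions) to $u_n$. As written, however, it is not yet an argument.
\begin{itemize}
\item Near a corner carrying a singular term $r^{\lambda}\phi(\theta)$, the arc contributions of the identity are $O(\varepsilon^{2\Re\lambda-3})$. They vanish exactly when every exponent present has $\Re\lambda>\tfrac32$.
\item The same condition is needed for $D^2u_n|_{\Gamma_1}\in L^2(\Gamma_1)$, that is, for the absorbing term $\int_{\Gamma_1}(\mathbf m\cdot\nu)q(u_n)\,d\Gamma$ to be finite at all.
\item If some exponent had $1<\Re\lambda\le\tfrac32$, that boundary integral and the arc terms would both diverge and compensate each other. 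There would be no sign to exploit, and the absorption step itself would break down.
\end{itemize}
So $\Re\lambda>1$ is not enough, and convexity is not an argument by itself. You would have to check, for each pair of boundary conditions meeting at a corner of opening less than $\pi$ (clamped--clamped, clamped--free, free--free), that every relevant root of the corresponding characteristic equation satisfies $\Re\lambda>\tfrac32$.
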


\vskip10pt
\begin{figure}[htbp]
	\centering
	\subfigure[]{
		\begin{minipage}[t]{0.245\textwidth}
			\centering
			\includegraphics[height=3cm,width=3.5cm]{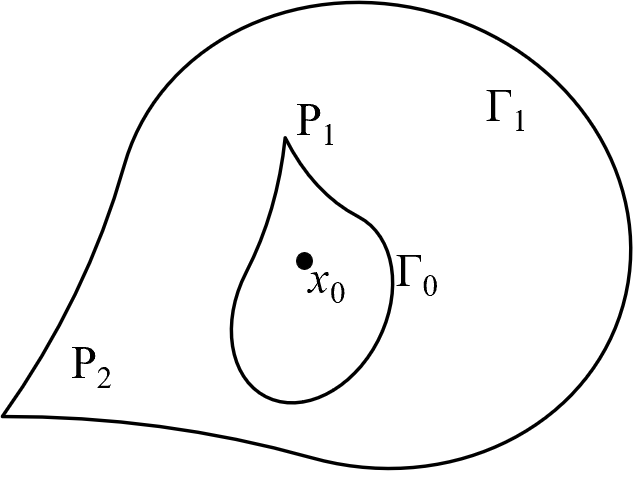}
			%\caption{fig2}
		\end{minipage}%
	}%
    \centering
    \caption{A domain with corners satisfying (i) $\Gamma_1$ and $\Gamma_0$ are disjoint;  (ii) the  hypotheses (G) and (H) hold.}
    	\label{fig111}
\end{figure}
\vskip10pt 
\begin{figure}[htbp]
	\centering
	\subfigure[]{
		\begin{minipage}[t]{0.245\textwidth}
			\centering
			\includegraphics[height=3cm,width=3.5cm]{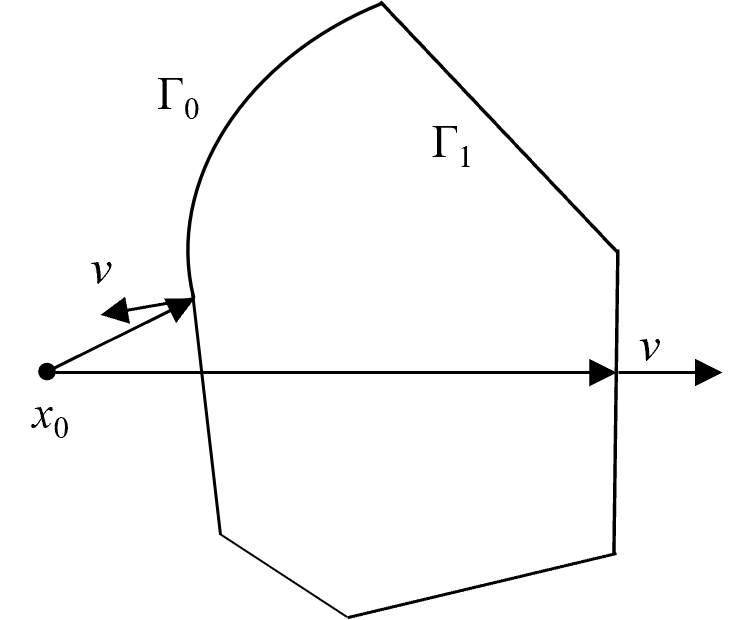}
			%\caption{fig1}
		\end{minipage}%
	}%
		\caption{A domain with corners satisfying (i) $\Gamma_1$ and $\Gamma_0$ shares two end points, both of which are corner points;  (ii) the geometric hypothesis (H) holds.}
	\label{fig22}
\end{figure}
%\begin{figure}[htbp]
%\centering
%\subfigure[]{
%	\begin{minipage}[t]{0.245\textwidth}
%	\centering
%	\includegraphics[height=3cm,width=3.5cm]{fig111.png}
%	%\caption{fig1}
%	\end{minipage}%
%	}%
%\centering
%\caption{A domain with corners satisfying (i) $\Gamma_1$ and $\Gamma_0$ are disjoint;  (ii) the  hypotheses (G) and (H) hold.}
%\label{fig2}
%\end{figure}
%
%\vskip10pt
%
%\begin{figure}[htbp]
%	\centering
%	\subfigure[]{
%		\begin{minipage}[t]{0.245\textwidth}
%			\centering
%			\includegraphics[height=3cm,width=3.5cm]{fig22.png}
%			\caption{fig1}
%		\end{minipage}%
%	}%
%	\centering
%	\caption{A domain with corners satisfying (i) $\Gamma_1$ and $\Gamma_0$ shares two end points, both of which are corner points;  (ii) the geometric hypothesis (H) holds.}
%	\label{fig3}
%\end{figure}

\begin{remark}
	In Ref. \cite{Rao1,Rao2}, the stabilization of both one- and two-dimensional versions of system \eqref{eq1} was analyzed. Specifically, Rao established polynomial stability with a decay rate of $t^{1/2} $ for system \eqref{eq1} on a bounded domain $\Omega \subset {\mathbb R}^2$ having a smooth boundary and satisfying the geometric condition (H).
	Our work extends this stability result to problems defined on non-smooth domains. More precisely, we show that:  (i) when the interior angles at the corner points are sufficiently small (see, e.g., Fig. \ref{fig111}), polynomial stability is likewise achieved; (ii) when the assumptions of Lemma \ref{lemma-green} hold (see, e.g., Fig. \ref{fig22}), we apply controls on the corner points, and proved that the semigroup corresponding to the closed loop system decays polynomially at the rate $t^{-{1/2}} $. In subsequent  work, we shall show the explicit decay rate for  system \eqref{eq1} and \eqref{eq4.3} on particular domains, such as rectangles \citeup{sz}.
\end{remark}

\medskip

The following lemmas are useful.

\begin{lemma}\label{cor} (See Ref. \cite{chen1,chen2})
	For sufficiently smooth functions $u$ and $v$, it holds
	\begin{align}
		\begin{split}
			\int_{\Omega}\Delta^{2}u {(\mathbf{m}\cdot\nabla u)}dx=&
			\;a(u)+\frac{1}{2}\int_{\partial\Omega}
			(\mathbf{m}\cdot\nu)q(u)d\Gamma
			+\sum_{i=1}^p[M_t(u)(P_i)] ({\mathbf{m}\cdot\nabla u(P_i)})\\
			&+\int_{\partial\Omega}\mathcal{B}_{2}u {(\mathbf{m}\cdot\nabla u)}-\mathcal{B}_{1}u {\partial_{\nu}(\mathbf{m}\cdot\nabla u)}d\Gamma,
		\end{split}
	\end{align}
	where
	\begin{align}\label{bu}
		q(u)=\bigg|\frac{\partial^{2}u}{\partial x_{1}^{2}}\bigg|^{2}+
		\bigg|\frac{\partial^{2}u}{\partial x_{2}^{2}}\bigg|^{2}
		+2\mu\frac{\partial^{2}u}{\partial x_{1}^{2}}\frac{\partial^{2}u}{\partial x_{2}^{2}}
		+2(1-\mu)\bigg|\frac{\partial^{2}u}{\partial x_{1}\partial x_{2}}\bigg|^{2}.
	\end{align}
\end{lemma}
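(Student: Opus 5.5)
The plan is to derive the identity directly from the corner-corrected Green formula \eqref{eq4.1} of Lemma \ref{lemma-green}, taking the test function to be the multiplier $v=\mathbf{m}\cdot\nabla u$. Since $u$ is assumed sufficiently smooth (up to the boundary away from, and with one-sided limits at, the corners), $v$ is admissible in \eqref{eq4.1}, and $v(P_i)=\mathbf{m}\cdot\nabla u(P_i)$ is well defined. Formula \eqref{eq4.1} then immediately gives
\begin{align*}
\int_\Omega \Delta^2 u\,(\mathbf{m}\cdot\nabla u)\,dx = a(u,\mathbf{m}\cdot\nabla u) + \int_{\partial\Omega}\big(\mathcal B_2 u\,(\mathbf{m}\cdot\nabla u) - \mathcal B_1 u\,\partial_\nu(\mathbf{m}\cdot\nabla u)\big)d\Gamma + \sum_{i=1}^p [M_t(u)(P_i)]\,(\mathbf{m}\cdot\nabla u(P_i)).
\end{align*}
This already produces the boundary integral and the corner sum in the claimed formula. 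It remains only to prove that
\begin{align*}
a(u,\mathbf{m}\cdot\nabla u)=a(u)+\tfrac12\int_{\partial\Omega}(\mathbf{m}\cdot\nu)\,q(u)\,d\Gamma .
\end{align*}

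For this we compute the second derivatives of the multiplier. Since $\mathbf{m}(x)=x-x_0$, we have $\partial_i m_k=\delta_{ik}$, so
\begin{align*}
\partial_i(\mathbf{m}\cdot\nabla u)=\partial_i u+\mathbf{m}\cdot\nabla(\partial_i u),\qquad
\partial_{ij}(\mathbf{m}\cdot\nabla u)=2\,\partial_{ij}u+\mathbf{m}\cdot\nabla(\partial_{ij}u).
\end{align*}
We substitute this into the definition \eqref{bilinear-a} of $a(\cdot,\cdot)$. Because the integrand of $a$ is a symmetric bilinear form in the Hessians whose diagonal is exactly $q(u)$ from \eqref{bu}, the contribution of $2\partial_{ij}u$ is $2a(u)$. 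The contribution of $\mathbf{m}\cdot\nabla(\partial_{ij}u)$ is $\int_\Omega \tfrac12\,\mathbf{m}\cdot\nabla q(u)\,dx$, since the directional derivative of a quadratic form equals the bilinear form evaluated at the derivative. Hence
\begin{align*}
a(u,\mathbf{m}\cdot\nabla u)=2a(u)+\tfrac12\int_\Omega \mathbf{m}\cdot\nabla q(u)\,dx .
\end{align*}

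Finally, we apply the divergence theorem, which is valid on the Lipschitz domain $\Omega$ (a curvilinear polygon) because $q(u)$ is smooth, to $\mathbf{m}\,q(u)$. Using $\operatorname{div}\mathbf{m}=2$ and $\int_\Omega q(u)\,dx=a(u)$, this gives
\begin{align*}
\tfrac12\int_\Omega \mathbf{m}\cdot\nabla q(u)\,dx=\tfrac12\int_{\partial\Omega}(\mathbf{m}\cdot\nu)\,q(u)\,d\Gamma-a(u).
\end{align*}
Combining this with the previous display yields the required expression for $a(u,\mathbf{m}\cdot\nabla u)$, and substituting it into the Green formula above proves the lemma. The corners contribute nothing additional here: $\partial\Omega$ is a finite union of $C^{1,1}$ arcs, so the boundary integral is a sum over the arcs and the finitely many corner points have zero boundary measure.

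The only delicate point is the bookkeeping in the second step, namely checking that each term of \eqref{bilinear-a}, including the $\mu$ cross terms and the $2(1-\mu)$ mixed term, pairs correctly so that the total is exactly $\tfrac12\,\mathbf{m}\cdot\nabla q(u)$. This follows from the symmetry of the form, and we would verify it term by term. All corner effects are already accounted for by invoking Lemma \ref{lemma-green} rather than the smooth Green formula \eqref{green}.
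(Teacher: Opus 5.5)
Your proof is correct. You test the corner-corrected Green formula \eqref{eq4.1} with $v=\mathbf{m}\cdot\nabla u$, then reduce $a(u,\mathbf{m}\cdot\nabla u)$ to $a(u)+\frac12\int_{\partial\Omega}(\mathbf{m}\cdot\nu)q(u)\,d\Gamma$ using $\partial_{ij}(\mathbf{m}\cdot\nabla u)=2\partial_{ij}u+\mathbf{m}\cdot\nabla\partial_{ij}u$ and the divergence theorem with $\operatorname{div}\mathbf{m}=2$. The paper gives no proof of its own and only cites \cite{chen1,chen2}; your route appears to be the standard multiplier computation behind that result (valid as written for real-valued $u$, and in the complex setting of Section 3 it is used for real parts).
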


\begin{lemma}\label{lemma:methods} (See Ref. \cite{BorTom,LiuRao05,RSS})
	Let $e^{t\mathcal{A}}$ be a $C_0$-semigroup of contractions on a Hilbert space $H$ with
	generator $\mathcal{A}$ such that $i\mathbb{R}\subseteq \rho(\mathcal{A})$. Then
	the semigroup is polynomially stable with the order ${\beta}$, i.e.,  there exists a constant $C>0$ such that
	\begin{align}
		\|e^{t\mathcal{A}}z_0\|_{H} \leq \frac{C}{t^{\beta}} \|z_0\|_{D(\mathcal{A})},
		\quad   z_0 \in D(\mathcal{A}), \;  t>1.
	\end{align}
	if and only if
	\begin{align}
		\label{huang}
		\limsup\limits_{|\lambda|\rightarrow\infty}\|\lambda^{-\frac{1}\beta}(i\lambda-\mathcal{A})^{-1}\| < \infty.
	\end{align}
\end{lemma}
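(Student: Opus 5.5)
Since this is the Borichev–Tomilov characterization, I would prove it by reducing it to a statement about the fractional powers of $\mathcal{A}$ and then using Plancherel's theorem in the Hilbert space $H$. Set $\alpha=1/\beta$. Because $i\mathbb{R}\subseteq\rho(\mathcal{A})$, in particular $0\in\rho(\mathcal{A})$, the norm $\|z_0\|_{D(\mathcal{A})}$ is equivalent to $\|\mathcal{A}z_0\|_H$. So the decay estimate is equivalent to
\begin{equation*}
\|e^{t\mathcal{A}}\mathcal{A}^{-1}\|=O(t^{-\beta}).
\end{equation*}
By the moment inequality for the sectorial operator $-\mathcal{A}$, this in turn is equivalent to $\|e^{t\mathcal{A}}\mathcal{A}^{-\alpha}\|=O(t^{-1})$. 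This second form is what I would actually prove in both directions. Condition \eqref{huang} says $\|(i\lambda-\mathcal{A})^{-1}\|\le C(1+|\lambda|)^{\alpha}$ on the whole imaginary axis, since the resolvent is continuous on compact sets of $i\mathbb{R}$.

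\emph{Necessity.} Assume $\|e^{t\mathcal{A}}\mathcal{A}^{-1}\|\le C(1+t)^{-\beta}$, and write $(i\lambda-\mathcal{A})^{-1}$ as a Laplace transform of the semigroup. Using the identity
\begin{equation*}
(\varepsilon+i\lambda-\mathcal{A})^{-1}\mathcal{A}^{-1}=\int_0^\infty e^{-(\varepsilon+i\lambda)t}e^{t\mathcal{A}}\mathcal{A}^{-1}\,dt,
\end{equation*}
I would split the integral at $t=|\lambda|^{\alpha}$. The tail is controlled by the decay hypothesis, and the initial piece is estimated by integrating by parts once, which trades a factor $\lambda$ against $\mathcal{A}^{-1}$. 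This yields $\|(i\lambda-\mathcal{A})^{-1}\mathcal{A}^{-1}\|\lesssim|\lambda|^{\alpha-1}$. The resolvent identity $(i\lambda-\mathcal{A})^{-1}=\mathcal{A}^{-1}\big(i\lambda(i\lambda-\mathcal{A})^{-1}-I\big)$ then gives $\|(i\lambda-\mathcal{A})^{-1}\|\lesssim|\lambda|^{\alpha}$. This direction is routine.

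\emph{Sufficiency.} Here the Hilbert-space structure is essential. I would proceed in three steps.
\begin{itemize}
\item[(a)] For $x\in H$, set $u(t)=e^{t\mathcal{A}}\mathcal{A}^{-\alpha}x$ for $t\ge0$. The Fourier transform of a smooth time cut-off of $u$ is, up to bounded error terms, $(i\lambda-\mathcal{A})^{-1}\mathcal{A}^{-\alpha}x$.
\item[(b)] The resolvent bound together with the moment inequality gives $\sup_{\lambda}\|(i\lambda-\mathcal{A})^{-1}\mathcal{A}^{-\alpha}\|<\infty$. This is the point where the exponent $\alpha=1/\beta$ enters, and it needs a careful interpolation between $\|(i\lambda-\mathcal{A})^{-1}\|\lesssim|\lambda|^{\alpha}$ and $\|(i\lambda-\mathcal{A})^{-1}\mathcal{A}^{-1}\|\lesssim|\lambda|^{\alpha-1}$.
\item[(c)] Plancherel's theorem then yields the square-integrability estimate $\int_0^\infty\|e^{t\mathcal{A}}\mathcal{A}^{-\alpha}x\|^2dt\le C\|x\|^2$. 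The same argument applied to the adjoint semigroup, whose generator $\mathcal{A}^*$ has the same resolvent bounds, gives $\int_0^\infty\|e^{t\mathcal{A}^*}(\mathcal{A}^*)^{-\alpha}y\|^2dt\le C\|y\|^2$.
\end{itemize}
Finally I would write
\begin{equation*}
t\,\big(e^{t\mathcal{A}}\mathcal{A}^{-2\alpha}x,y\big)=\int_0^t\big(e^{s\mathcal{A}}\mathcal{A}^{-\alpha}x,\;e^{(t-s)\mathcal{A}^*}(\mathcal{A}^*)^{-\alpha}y\big)\,ds
\end{equation*}
and apply Cauchy–Schwarz, using the two $L^2$ bounds. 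This gives $\|e^{t\mathcal{A}}\mathcal{A}^{-2\alpha}\|=O(t^{-1})$.

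\emph{Main obstacle.} The last step loses a factor: it produces decay $t^{-1}$ for $\mathcal{A}^{-2\alpha}$ rather than for $\mathcal{A}^{-\alpha}$, so by itself it only gives rate $t^{-\beta/2}$. Recovering the sharp exponent is where I expect the real work. My first attempt would be to run the Plancherel argument for the family $e^{t\mathcal{A}}\mathcal{A}^{-\alpha}(I-\mathcal{A})^{-\delta}$ with a $t$-dependent choice of $\delta$, and combine it with the moment inequality. If that does not close, I would follow Borichev–Tomilov directly and use the quantitative estimate $\|e^{t\mathcal{A}}\mathcal{A}^{-\alpha}\|\le C/t$, obtained from a contour-shift/Carlson-type argument in the half-plane $\Re\lambda>0$, where the contraction property bounds the resolvent by $1/\Re\lambda$.

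Since the lemma is quoted from \cite{BorTom,LiuRao05,RSS}, for the paper's purposes it would suffice to verify that the operators $\mathcal{A}_j$ fit the hypotheses: they are contraction generators on the Hilbert space $\mathcal H$ by Lemma \ref{well}. In Theorem \ref{Th1} the lemma is applied with $\beta=1/2$, i.e. $\alpha=2$.
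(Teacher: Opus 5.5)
\textbf{Verdict: genuine gap in the sufficiency direction.} The paper gives no proof of this lemma; it simply cites \cite{BorTom,LiuRao05,RSS}. Your proposal does not establish the sharp sufficiency direction, which is the only direction the paper uses.

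Several parts of your plan are fine. The reduction to $\|e^{t\mathcal A}(-\mathcal A)^{-\alpha}\|=O(t^{-1})$ is correct. For $\alpha>1$, such as the paper's $\alpha=2$, it also needs $e^{t\mathcal A}\mathcal A^{-n}=(e^{t\mathcal A/n}\mathcal A^{-1})^n$, not only the moment inequality. Steps (a)--(c) can be made rigorous.

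The problem is the final step. Pairing two square-integrable orbits by Cauchy--Schwarz spends $(-\mathcal A)^{-\alpha}$ on both factors. As you note yourself, this gives only $\|e^{t\mathcal A}\mathcal A^{-1}\|=O(t^{-\beta/2})$, i.e.\ $t^{-1/4}$ instead of $t^{-1/2}$ in Theorem~\ref{Th1}. Neither fallback is carried out. The contour version, as described, reproduces the same loss: on $\Re\lambda=1/t$, writing $t\,e^{t\mathcal A}$ as the inverse Laplace transform of $R(\lambda,\mathcal A)^2$ and controlling the unsmoothed factor only through $\|R(\lambda,\mathcal A)\|\le 1/\Re\lambda$ costs exactly a factor $\sqrt t$. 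Deferring to Borichev--Tomilov at that point is citing the statement, not proving it.

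\textbf{The missing idea.} Use the resolvent bound as an $L^2$ multiplier, and test it on a truncated orbit (a Datko-type argument), instead of multiplying two orbits.

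First, extend the resolvent bound off the axis. Use the Neumann series where $\Re\lambda\,\|R(i\Im\lambda,\mathcal A)\|\le\frac12$, and $\|R(\lambda,\mathcal A)\|\le 1/\Re\lambda$ elsewhere. This gives $\|R(\lambda,\mathcal A)\|\le 2C(1+|\Im\lambda|)^{\alpha}$ for $0<\Re\lambda\le 1$. Your interpolation in (b) then yields
\[
M:=\sup_{\Re\lambda>0}\|R(\lambda,\mathcal A)(-\mathcal A)^{-\alpha}\|<\infty
\]
on the whole half-plane, not only on $i\mathbb{R}$.

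Second, apply Plancherel's theorem on the lines $\Re\lambda=\varepsilon$ and let $\varepsilon\downarrow 0$. This shows that $h(t)=\int_0^t e^{(t-s)\mathcal A}(-\mathcal A)^{-\alpha}g(s)\,ds$ satisfies $\|h\|_{L^2(\mathbb{R}_+;H)}\le M\|g\|_{L^2(\mathbb{R}_+;H)}$.

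Third, take $g=\mathbf{1}_{[0,\tau]}e^{\cdot\mathcal A}x$. Then $\|g\|_{L^2}\le\sqrt\tau\,\|x\|$, and $h(t)=\tau\, e^{t\mathcal A}(-\mathcal A)^{-\alpha}x$ for $t\ge\tau$. Since $t\mapsto\|e^{t\mathcal A}y\|$ is nonincreasing,
\[
\tau^{3}\big\|e^{2\tau\mathcal A}(-\mathcal A)^{-\alpha}x\big\|^{2}\le\tau^{2}\int_{\tau}^{2\tau}\big\|e^{t\mathcal A}(-\mathcal A)^{-\alpha}x\big\|^{2}dt\le M^{2}\tau\|x\|^{2},
\]
that is, $\|e^{t\mathcal A}(-\mathcal A)^{-\alpha}\|\le 2M/t$, with no loss.

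\textbf{A smaller point in the necessity direction.} For $\beta\le 1$ (including $\beta=\frac12$), $t^{-\beta}$ is not integrable, so the tail of the Laplace integral cannot be bounded directly by the decay hypothesis. For $\Re\lambda>0$, write that tail as $e^{-\lambda\tau}e^{\tau\mathcal A}\mathcal A^{-1}(\lambda R(\lambda,\mathcal A)-I)$. Choose $\tau\sim|\lambda|^{\alpha}$ so that $|\lambda|\,\|e^{\tau\mathcal A}\mathcal A^{-1}\|\le\frac12$, absorb the resulting term into the left-hand side, and let $\Re\lambda\downarrow 0$.
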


\section{Proof of main result}

The proof of Theorem \ref{Th1} is presented in this section. For clarity, we only provide the detailed argument only for the case  $j=2$. The case $j=1$ follows analogously.
According to Lemma \ref{lemma:methods}, to prove \eqref{huang}, it suffices to show the existence of a constant $r > 0$ such that, for  $\theta\ge2 , $
\begin{align}\label{equalcon}
	\inf_{\|z\|_{\cal H}=1,\omega\in\mathbb{R}}\omega^{\theta}\|i\omega z-\mathcal{A}_2 z\|_{\mathcal{H}}\geq r.
\end{align}
If \eqref{equalcon} fails, there exists $\{\omega_{n},z_{n}\}_{n=1}^{\infty}\subset\mathbb{R}\times \mathcal{D}(\mathcal{A}_2)$ with
\begin{align}\label{unitnorm}
	\|z_{n}\|_{\mathcal{H}}=\|(u_{n},\;v_{n},\;\xi_{n},\;\eta_{n})\|_{\mathcal{H}}=1,
\end{align}
such that
\begin{align}\label{zero}
	f_{n } = \big(f_{1,n},f_{2,n},g_{1,n},g_{2,n}\big) \doteq \omega_{n}^{\theta}(i\omega_{n}I-\mathcal{A}_2)z_{n} = o(1) \;  \mbox{in } \; \mathcal{H},
\end{align}
which is equivalent to as $n\to \infty,$
\begin{subequations}
	\begin{align}
		&f_{1,n}=\omega_{n}^{\theta}(i\omega_{n}u_{n}-v_{n})=o(1), &&  \mbox{in} \;\; V,\label{3.5a}\\
		&f_{2,n}=\omega_{n}^{\theta}(i\omega_{n}v_{n}+\Delta^{2}u_{n})=o(1), & & \mbox{in} \;\; L^{2}(\Omega),\label{3.5b}\\
		& g_{1,n}=\omega_{n}^{\theta}[(i\omega_{n}J+d_1)\xi_{n}+\mathcal{B}_{1}u_{n}]=o(1), & & \mbox{in} \;\; L^{2}(\Gamma_{1}),\label{3.5c}\\
		& g_{2,n}=\omega_{n}^{\theta}[(i\omega_{n}\rho+d_2)\eta_{n}-\mathcal{B}_{2}u_{n}]=o(1),
		& &  \mbox{in} \;\; L^{2}(\Gamma_{1})\label{3.5d},
	\end{align}
\end{subequations}
and
\begin{align}\label{35e}
	M_t(u_n)(P_i) = k_i v_n (P_i) .
\end{align}
Furthermore, by the dissipativeness of operator $\mathcal{A}_2$ and \eqref{zero}, one has
\begin{align} \label{diss}
	Re\big((i\omega_{n}-\mathcal{A}_2)z_{n},\; z_{n}\big)_{\mathcal H}
	=d_1\|\xi_{n}\|_{L^2(\Gamma_{1})}^{2}
	+d_2\|\eta_{n}\|_{L^2(\Gamma_{1})}^{2}
	+\sum_{i=1}^p k_i |v_n(P_i)|^2= \omega_{n}^{-\theta}o(1).
\end{align}
Consequently, by \eqref{3.5a} and \eqref{diss},
\begin{align}\label{diss1}
	\|u_{n}\|_{L^2(\Gamma_{1})},\;\; \|\partial_\nu u_{n}\|_{L^2(\Gamma_{1})}=\omega_{n}^{-1-\frac{\theta}{2}}o(1).
\end{align}
%Our goal is to reach a contradiction by showing $\|X_{n}\|_{H}=o(1)$. What follows in this section, we shall prove
In the following, we shall show that
\begin{align}
	a(u_{n}),\; \; \|v_{n}\|_{L^2(\Omega)}    =o(1).
\end{align}
Then, combining this with \eqref{diss} yields  $\|z_{n}\|_{\mathcal{H}}=o(1)$,
which  leads to a contradiction.

Multiplying \eqref{3.5a} and \eqref{3.5b} by $\mathbf{m}\cdot\nabla u_n$ separately and summing the results, we obtain
\begin{align} \label{mu1}
	(\Delta^{2}u_{n},\mathbf{m}\cdot\nabla u_{n})_{L^2(\Omega)}-(v_{n},\mathbf{m}\cdot\nabla v_{n})_{L^2(\Omega)} =\omega_{n}^{-\theta}o(1).
\end{align}
By Lemma \ref{cor} and the boundary conditions, it follows that
\begin{align}\label{mu22}
	\begin{split}
		\Re&\bigg[a(u_{n}) + \|v_n\|^2
		+\frac{1}{2}\int_{\partial\Omega}(\mathbf{m}\cdot\nu)q(u_{n})d\Gamma\\
		&
		+\int_{\partial\Omega}\mathcal{B}_{2}u_{n} {(\mathbf{m}\cdot\nabla u_{n})}
		-\mathcal{B}_{1}u_{n} {\partial_{\nu}(\mathbf{m}\cdot\nabla u_{n})}d\Gamma\\
		&
		-\frac{1}{2}\int_{\Gamma_1}(\mathbf{m}\cdot\nu)|v_{n}|^{2}d\Gamma+\sum_{i=1}^{p}k_i v_n(P_i)
		({\mathbf{m}_1\cdot\nabla u_n(P_i)})
		\bigg]=\omega_{n}^{-\theta}o(1).
	\end{split}
\end{align}
Since $u_n=\partial_{\nu}u_n=0$ on $\Gamma_0$, it follows that
\begin{align}
	\nabla u_n =0 ,\;\;\mathcal B_1 u_n = \Delta u_n,\;\;\partial_{\nu}(\mathbf m\cdot\nabla u_n) = (\mathbf m\cdot \nu)\Delta u_n,\;\;q(u_n)=(\Delta u_n)^2\;\;\text{on}\;\Gamma_0.
\end{align}
Then, it follows from \eqref{assume-c}, \eqref{diss} and  \eqref{mu22} that
\begin{align}\label{mu2}
	\begin{split}
		a(u_{n}) + \|v_n\|^2 \le &
		-\frac{1}{2}\int_{\Gamma_1}(\mathbf{m}\cdot\nu)q(u_{n})d\Gamma
		- \Re \int_{\Gamma_1} \big[\mathcal{B}_{2}u_{n} {(\mathbf{m}\cdot\nabla u_{n})} \\
		&
		- \mathcal{B}_{1}u_{n} {\partial_{\nu}(\mathbf{m}\cdot\nabla u_{n})}\big]d\Gamma
		- \sum_{i=1}^{p}k_i v_n(P_i)
		({\mathbf{m}_1\cdot\nabla u_n(P_i)})
		+\omega_{n}^{-\theta}o(1).
	\end{split}
\end{align}
From  \eqref{3.5d} and \eqref{diss1}, we deduce
\begin{align}
	\label{b1}
\begin{split}
		\Big|\int_{\Gamma_{1}}\mathcal{B}_{2}u_{n} {(\mathbf{m}\cdot\nabla u_{n})}d\Gamma
	\Big|
	&\leq C\|(i\omega_n \rho +d_2)\eta_{n}\|_{L^2(\Gamma_1)}(\|u_n\|_{L^2(\Gamma_1)}+\| \partial_\nu u_n\|_{L^2(\Gamma_1)})\\
	&  = \omega_n^{-\theta}o(1).
\end{split}
\end{align}
Furthermore, note that
$$  \partial_{\nu }(\mathbf m\cdot\nabla u_n)
= \partial_{\nu } u_n  +
m_1\nu_1{\frac{\partial^{2}u_n}{\partial x^{2}_{1}}}
+m_2\nu_2{\frac{\partial^{2}u_n}{\partial x^{2}_{2}}}   +(m_1\nu_2+m_2\nu_1)
{\frac{\partial^{2}u_n}{\partial x_{1}\partial
		x_{2}}}.
$$
Consequently,
\begin{align}
	\label{b2}
	\begin{split}
		\Re  \int_{\Gamma_{1}}\mathcal{B}_{1}u_{n} {\partial_{\nu}(\mathbf{m}\cdot\nabla u_{n})}d\Gamma
		=& \displaystyle
		\int_{\Gamma_{1}}\Big[(i\omega_n J+d_1)\xi_n +\omega_n^{-\theta} g_{1,n}\Big] \Big[\partial_{\nu}u_{n}+   m_1\nu_1{\frac{\partial^{2}u_n}{\partial x^{2}_{1}}}
		\\ & \displaystyle +m_2\nu_2{\frac{\partial^{2}u_n}{\partial x^{2}_{2}}}
		+
		(m_1\nu_2+m_2\nu_1)
		{\frac{\partial^{2}u_n}{\partial x_{1}\partial
				x_{2}}}\Big]d\Gamma.
	\end{split}
\end{align}
Using \eqref{diss}-\eqref{diss1} in the above inequality yields
\begin{align}\label{b3}
	\begin{split}
		&\Bigg|\int_{\Gamma_{1}}\mathcal{B}_{1}u_{n} {\partial_{\nu}(\mathbf{m}\cdot\nabla u_{n})}d\Gamma\bigg|\\
		\leq  & \displaystyle
		\frac{\gamma(1-\mu)}{4} \int_{\Gamma_{1}}  \bigg(\bigg|{\frac{\partial^{2}u_n}{\partial
				x^{2}_{1}}}\bigg|^2 +\bigg|{\frac{\partial^{2}u_n}{\partial
				x^{2}_{2}}}\bigg|^2 +2 \bigg|{\frac{\partial^{2}u_n}{\partial
				x_{1}\partial x_{2}}}\bigg|^2\Bigg)d\Gamma
		+ \omega_n^{2-\theta} o(1).
	\end{split}
\end{align}
where the constant ${{\gamma(1-\mu)\over4}}$, which comes from the Cauchy-Schwarz inequality, will be used in \eqref{mu4}.
Note that by assumption \eqref{assume-c}, we have on
$\Gamma_1$
\begin{align}
	\label{be4}
	(\mathbf{m}\cdot \nu)q(u_n)\ge \gamma (1-\mu) \bigg(\bigg|{\frac{\partial^{2}u_n}{\partial
			x^{2}_{1}}}\bigg|^2 +\bigg|{\frac{\partial^{2}u_n}{\partial
			x^{2}_{2}}}\bigg|^2 +2 \bigg|{\frac{\partial^{2}u_n}{\partial
			x_{1}\partial x_{2}}}\bigg|^2\bigg).
\end{align}

\noindent
Moreover, according to \cite{chen1,chen2},
\begin{align}\label{point-m}
	\begin{split}
		\Bigg|\sum_{i=1}^{p} k_iv_n(P_i)
		({\mathbf m \cdot\nabla u_n(P_i)})\Bigg|
		& \leq
		\Bigg|\sum_{i=1}^p
		\bigg( \frac{k_i^2}{4\varepsilon}|v_n(P_i)|^2 + \varepsilon |\mathbf m \cdot \nabla  u_n (P_i)|^2\bigg)\\
		& \leq   \omega_n^{-\theta }o(1)+ \varepsilon \Big[a(u_n) + \int_{\Gamma_1}(\mathbf{m}_1\cdot\nu)
		q(u_n)d\Gamma \Big].
	\end{split}
\end{align}

\noindent
In summary,  substituting \eqref{diss}, \eqref{b1}, \eqref{b3}, \eqref{be4}, \eqref{point-m} into \eqref{mu2} and taking $\varepsilon$ small enough, we obtain for $\theta\geq 2, $
\begin{align}\label{mu4}
	a(u_{n}) + \|v_n\|^2_{L^2(\Omega)} \leq   o(1).
\end{align}

Finally, we show that $i \mathbb R  \subset \rho(\mathcal A_2)$.
From Lemma \ref{well}, we have $0\in\rho({\mathcal{A}_2})$. Thus, we can define
$$\widetilde{\omega}:=\sup \{ R>0: [-iR, iR]\subset \rho(\mathcal{A}_2)\}.$$
To complete the proof, it suffices to show $\widetilde{\omega}=\infty$.
Suppose, for contradiction, that $0<\widetilde{\omega}<\infty$.
Then there exist a sequence
$\omega_n \to \widetilde{\omega}$  and a sequence
$z_n \in D(\mathcal{A}_2)$ with $\|z_n\|_\mathcal{H}=1$ such that
\begin{align}\label{+331-}
	\Vert (i\omega_n I-\mathcal{A}_2) z_n\Vert_\mathcal{H} \to 0,\quad n\to\infty,
\end{align}
Applying the same reasoning as in the proof of \eqref{huang} and using $\omega_n \to \widetilde{\omega}$ leads to a contradiction, which completes the proof.

\section*{Acknowledgments}
The authors are grateful to Professor Otared Kavian for valuable discussions and helpful comments.

The project is supported by the National Natural Science Foundation of China (grants No. 12271035, 12131008) and
Beijing Municipal Natural Science Foundation (grant No. 1232018).

%\section*{Conflict of Interest}
%
%The authors declare no conflict of interest.

% XX is an editor-in-chief for Acta Mathematicae Applicatate Sinica (English Series) and was not involved in the editorial review or the decision to publish this article. All authors declare that there are no competing interests.

Y.N. Sun, School of Mathematics and Statistics, Beijing Institute of Technology, Beijing, 100081, P.R. China

Email address: yanansun@bit.edu.cn

Q. Zhang, 
School of Mathematics and Statistics, Beijing Institute of Technology, Beijing, 100081, P.R. China

Email address: zhangqiong@bit.edu.cn

\end{document}